\def\CC{{\mathbb C}}
\def\FF{{\mathbb F}}
\def\QQ{{\mathbb Q}}
\def\QQ{{\mathbb Q}}
\def\RR{{\mathbb R}}
\def\ZZ{{\mathbb Z}}
\def\0{{\mathbf 0}}
\def\1{{\mathbf 1}}
\def\v{{\mathbf v}}
\def\x{{\mathbf x}}
\def\Fix{\mathrm{Fix}}
\def\ch{\mathrm{char}}
\def\Perm{\mathrm{Perm}}
\def\sgn{\mathrm{sgn}}
\def\ord{\mathrm{ord}}
\def\tr{\mathrm{tr}}
\def\max{\mathrm{max}}
\theoremstyle{plain}
\newtheorem{thm}{Theorem}
\newtheorem{lem}[thm]{Lemma}
\theoremstyle{definition}
\newtheorem{rem}{Remark}
\newtheorem{ex}{Example}
\title[A $p$-adic Perron-Frobenius Theorem]{A $p$-adic Perron-Frobenius Theorem}
\author{Robert Costa}
\address{Robert Costa; Tufts University; Department of Mathematics; 503 Boston Avenue; Medford, MA 02155}
\email{Robert\_J.Costa@tufts.edu}
\author{Patrick Dynes}
\address{Patrick Dynes; Department of Mathematical Sciences; O-110 Martin Hall; Clemson, S.C. 29634}
\email{pdynes@g.clemson.edu}
\author{Clayton Petsche}
\address{Clayton Petsche; Department of Mathematics; Oregon State University; Corvallis OR 97331 U.S.A.}
\email{petschec@math.oregonstate.edu}
\date{August 28 2015.  Revised February 11 2016}
\keywords{Perron-Frobenius theorem, maximal eigenvalue, $p$-adic and non-Archimedean fields, iteration of matrices}
\subjclass[2010]{15B48, 15B51, 11S99, 37P20}
\begin{document}

\begin{abstract}
We prove that if an $n\times n$ matrix defined over $\QQ_p$ (or more generally an arbitrary complete, discretely-valued, non-Archimedean field) satisfies a certain congruence property, then it has a strictly maximal eigenvalue in $\QQ_p$, and that iteration of the (normalized) matrix converges to a projection operator onto the corresponding eigenspace.  This result may be viewed as a $p$-adic analogue of the Perron-Frobenius theorem for positive real matrices.
\end{abstract}

\maketitle

\section{Introduction}

The Perron-Frobenius theorem gives information about the eigenvalues and eigenvectors of certain matrices with nonnegative real entries.  In 1907 Perron proved the simplest version of the result, for positive matrices, and in 1912 Frobenius extended the result to (irreducible) nonnegative matrices.  (Recall that a matrix is said to be {\em positive} (resp. {\em nonnegative}) if all of its entries are positive (resp. {\em nonnegative}) real numbers.)  Perron's version of the theorem can be stated as follows.  

\begin{thm}[\cite{MR2356919}, Ch.~16, Thm. 1]
\label{thm:perron_frobenius}
Let $A$ be a positive $n\times n$ matrix.   Then $A$ has a positive real eigenvalue $\lambda_\max$, of multiplicity one, such that $|\lambda| < \lambda_\max$ for all other complex eigenvalues $\lambda$ of $A$. Furthermore, there exists a positive $\lambda_\max$-eigenvector of $A$.
\end{thm}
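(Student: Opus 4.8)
The statement is the classical Perron theorem, and my plan is the usual three-part argument: first produce the positive eigenpair by a compactness argument, then establish strict dominance via the triangle inequality, and finally check simplicity. For existence I would work on the simplex $\Delta = \{\x \in \RR^n : x_i \geq 0 \text{ for all } i,\ \sum_i x_i = 1\}$, which is nonempty, compact and convex. Since every entry of $A$ is strictly positive, $A$ carries each nonzero vector with nonnegative entries to a strictly positive vector; in particular $\sum_i (A\x)_i > 0$ for $\x \in \Delta$, so $f(\x) := A\x / \sum_i (A\x)_i$ is a well-defined continuous self-map of $\Delta$. Brouwer's fixed-point theorem then gives $\v \in \Delta$ with $f(\v) = \v$, i.e. $A\v = \lambda_\max\v$ with $\lambda_\max := \sum_i (A\v)_i > 0$, and $\v = \lambda_\max^{-1}A\v$ has strictly positive entries. (Alternatively one can extract $\lambda_\max$ and $\v$ from the Collatz--Wielandt variational formula and avoid Brouwer.) Applying the same construction to the transpose $A^{T}$, also positive, I get a positive row vector $\h$ and a scalar $\mu > 0$ with $\h A = \mu\,\h$; pairing with $A\v = \lambda_\max\v$ gives $\mu\,(\h\cdot\v) = \lambda_\max(\h\cdot\v)$, and since $\h\cdot\v > 0$, in fact $\mu = \lambda_\max$.

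For strict dominance, let $\lambda \in \CC$ be any eigenvalue, with eigenvector $\w \neq \0$, and write $|\w|$ for the vector of entrywise moduli. The triangle inequality gives $|\lambda|\,|w_i| = \bigl|\sum_j A_{ij}w_j\bigr| \leq \sum_j A_{ij}|w_j|$ for each $i$, i.e. $|\lambda|\,|\w| \leq A|\w|$ entrywise; pairing with $\h$ and using $\h A = \lambda_\max\h$ yields $|\lambda|\,(\h\cdot|\w|) \leq \lambda_\max(\h\cdot|\w|)$, and $\h\cdot|\w| > 0$, so $|\lambda| \leq \lambda_\max$. If $|\lambda| = \lambda_\max$, then $\h\cdot(A|\w| - |\lambda|\,|\w|) = 0$ with $\h$ positive and $A|\w| - |\lambda|\,|\w|$ nonnegative, forcing $A|\w| = \lambda_\max|\w|$; hence $|\w| = \lambda_\max^{-1}A|\w|$ is strictly positive and the triangle inequality holds with equality in every coordinate. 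Since every $A_{ij}$ is strictly positive and every $w_j$ nonzero, this forces all the $w_j$ to have one common complex argument, so $\w = c\,|\w|$ with $|c| = 1$; then $A\w = c\lambda_\max|\w| = \lambda_\max\w$, so $\lambda = \lambda_\max$. Hence $|\lambda| < \lambda_\max$ for every $\lambda \neq \lambda_\max$.

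For simplicity I would first record that any nonzero nonnegative $\lambda_\max$-eigenvector $\x$ satisfies $\x = \lambda_\max^{-1}A\x > 0$. Given a real $\lambda_\max$-eigenvector $\w$, after replacing $\w$ by $-\w$ if needed I may assume $w_i > 0$ for some $i$; setting $t = \min\{v_i/w_i : w_i > 0\}$, the vector $\v - t\w$ is a nonnegative $\lambda_\max$-eigenvector with a zero coordinate, hence not strictly positive, hence $\0$, so $\w \in \RR\v$. A complex $\lambda_\max$-eigenvector splits into real and imaginary parts, each real and each a multiple of $\v$, so the $\lambda_\max$-eigenspace is the line $\CC\v$ and the geometric multiplicity is one. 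If the algebraic multiplicity were larger there would exist $\w$ with $(A - \lambda_\max I)\w = \v$, giving the contradiction $0 = \h(A - \lambda_\max I)\w = \h\v > 0$. Thus $\lambda_\max$ is a simple root of the characteristic polynomial, and $\v$ is the required positive eigenvector.

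I expect the main obstacle to be the equality analysis in the second step: promoting $|\lambda| \leq \lambda_\max$ to a strict inequality requires extracting from the equality case of the triangle inequality that $\w$ is a unimodular multiple of the positive vector $|\w|$, and this is exactly the point where strict positivity of $A$ — not the mere nonnegativity of Frobenius's version — is genuinely used. Producing the positive left eigenvector $\h$ at the outset keeps both this step and the algebraic-simplicity argument short.
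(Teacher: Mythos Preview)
Your argument is correct and is essentially the standard proof of Perron's theorem: Brouwer on the simplex for existence of a positive eigenpair, the left eigenvector to promote the triangle-inequality bound to strict dominance, and the short generalized-eigenvector trick for algebraic simplicity. All three steps are handled carefully, and your flagging of the equality case in the triangle inequality as the crux is accurate.

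However, the paper does not prove this statement at all: Theorem~\ref{thm:perron_frobenius} is quoted as background from \cite{MR2356919} with a citation in lieu of proof, and the paper's own work begins only with the $p$-adic analogue in Theorem~\ref{MainThmGeneral}. So there is no ``paper's own proof'' to compare against here; your write-up supplies a complete classical argument where the paper deliberately gives none.
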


A useful application of the Perron-Frobenius theorem is the following well-known result.  Following \cite{MR2356919}, an $n\times n$ matrix is said to be {\em stochastic} if it is nonnegative and the sum along each column of the matrix is equal to $1$.

\begin{thm}[\cite{MR2356919}, Ch.~16, Thm. 3]
\label{thm:stochastic_matrices}
Let $A$ be a positive $n\times n$ stochastic matrix. Then the maximal eigenvalue $\lambda_\max$ of $A$ is equal to $1$, and for any nonnegative vector $\x$, the sequence $\{A^k \x\}$ converges to a positive $\lambda_\max$-eigenvector of $A$.
\end{thm}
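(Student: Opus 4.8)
The plan is to deduce both assertions from Perron's theorem (Theorem~\ref{thm:perron_frobenius}) together with the standard decomposition of $\CC^n$ into generalized eigenspaces. Write $\1=(1,\dots,1)$ for the all-ones row vector, so that the hypothesis that $A$ is stochastic reads precisely as $\1 A=\1$. First I would establish $\lambda_\max=1$. Since $A$ is positive, Theorem~\ref{thm:perron_frobenius} produces a positive column vector $\v$ with $A\v=\lambda_\max\v$, and multiplying on the left by $\1$ and using $\1 A=\1$ gives $\lambda_\max(\1\cdot\v)=\1 A\v=\1\cdot\v$; since $\v$ is positive we have $\1\cdot\v>0$, hence $\lambda_\max=1$.

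Next I would set up the eigenspace splitting. By the multiplicity-one assertion of Theorem~\ref{thm:perron_frobenius}, the generalized $\lambda_\max$-eigenspace of $A$ is just the line $\CC\v$, and the remaining generalized eigenspaces span an $A$-invariant complement $W$ with $\CC^n=\CC\v\oplus W$ and $\dim W=n-1$. I claim $W=\{\w\in\CC^n : \1\cdot\w=0\}$. Indeed, if $\w$ is a generalized eigenvector for an eigenvalue $\mu\neq 1$, say $(A-\mu I)^m\w=\0$, then the identity $\1(A-\mu I)=(1-\mu)\1$ iterated $m$ times yields $(1-\mu)^m(\1\cdot\w)=\1(A-\mu I)^m\w=0$, so $\1\cdot\w=0$; as both $W$ and the hyperplane $\{\1\cdot\w=0\}$ have dimension $n-1$, they coincide. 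Consequently any vector decomposes uniquely as $\x=c\v+\w$ with $\w\in W$, and pairing with $\1$ shows $c=(\1\cdot\x)/(\1\cdot\v)$. Since $A\v=\v$, this gives $A^k\x=c\v+A^k\w$.

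The final step is to show $A^k\w\to\0$ for every $\w\in W$, which is the one place genuine analytic content enters. Every eigenvalue of the restriction $A|_W$ has absolute value strictly less than $\lambda_\max=1$ by the ``strictly maximal'' clause of Theorem~\ref{thm:perron_frobenius}, so writing $A|_W$ in Jordan form over $\CC$ produces a bound $\|(A|_W)^k\|=O(k^{n-1}\rho^k)$ for some $\rho\in(0,1)$, whence $A^k\w\to\0$. Therefore $A^k\x\to c\v$. Finally, if $\x$ is nonnegative and nonzero then $\1\cdot\x>0$, so $c>0$ and $c\v$ is a positive vector fixed by $A$, i.e.\ a positive $\lambda_\max$-eigenvector, as claimed. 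The main obstacle is not any isolated deep step but the careful bookkeeping of the spectral decomposition: one must use algebraic (not merely geometric) simplicity of $\lambda_\max$ so that no Jordan block at $\lambda_\max$ spoils convergence, and one must verify that the $A$-invariant complement of the Perron line is exactly the hyperplane annihilated by $\1$.
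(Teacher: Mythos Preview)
The paper does not supply its own proof of this theorem: it is quoted as background, with the citation \cite{MR2356919}, Ch.~16, Thm.~3 standing in for the argument. So there is no in-paper proof against which to compare your proposal.

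That said, your argument is correct and is essentially the standard one. The identification $\lambda_\max=1$ via $\1 A=\1$ is clean, your verification that the $A$-invariant complement $W$ coincides with the hyperplane $\{\w:\1\cdot\w=0\}$ is the right structural observation, and the Jordan-block estimate $\|(A|_W)^k\|=O(k^{n-1}\rho^k)$ with $\rho<1$ gives the convergence. You were right to flag that one needs \emph{algebraic} simplicity of $\lambda_\max$ (which is what Theorem~\ref{thm:perron_frobenius} asserts) so that no Jordan block at $1$ appears. One cosmetic point: the theorem as stated says ``any nonnegative vector $\x$,'' but of course $\x=\0$ gives limit $\0$, which is not an eigenvector; you handled this by inserting ``and nonzero,'' which is the intended reading.
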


The Perron-Frobenius theorem and its application to the iteration of stochastic matrices have implications in many areas of mathematics, including graph theory, probability theory, and symbolic dynamical systems.  Real-world applications of these results include the modeling of changes in atomic nuclei and populations in ecological systems, and the PageRank algorithm used by Google as a basis for its internet search strategy.  See \cite{MR2356919} and \cite{MR1786935} for further background on the Perron-Frobenius theorem and its applications.

In this paper we give an analogue of (Perron's form of) the Perron-Frobenius theorem for $n\times n$ matrices defined over the field $\QQ_p$ of $p$-adic numbers.  Denote by $|\cdot|_p$ the $p$-adic absolute value on $\QQ_p$, normalized as usual so that $|p|_p=1/p$, and for each $c\in\QQ_p$ and real $r>0$, denote by $D(c,r)=\{x\in\QQ_p\mid|x-c|_p\leq r\}$ the closed $p$-adic disc with center $c$ and radius $r$.  As usual, denote by $\CC_p$ the completion of the algebraic closure of $\QQ_p$.

To formulate a $p$-adic analogue of Perron's theorem, we take the $p$-adic disc $D(1,1/p)$ as our analogue of the positive real numbers.  The motivation for this choice is simply the observation that $(0,+\infty)$ is the largest subinterval of $\RR$ containing $1$ but not containing $0$, and $D(1,1/p)$ is the largest disc in $\QQ_p$ containing $1$ but not containing $0$.  This leads to a fairly close parallel with Perron's theorem, and as we explain in Remark~\ref{GeneralizationRemark}, the choice of the disc $D(1,1/p)$ is not as limiting as it may at first appear.  The simplest nontrivial case of our main result is the following.

\begin{thm}\label{MainThmIntro}
Let $p$ be a prime number and let $n$ be a positive integer such that $p\nmid n$.  Let $A$ be an $n\times n$ matrix over $\QQ_p$ with all entries in the disc $D(1,1/p)$.  Then the following conditions hold.
\begin{itemize}
	\item[{\bf (a)}]  $A$ has a simple eigenvalue $\lambda_\max\in \QQ_p$ such that $|\lambda_\max|_p=1$, and $|\lambda|_p<1$ for all other eigenvalues $\lambda\in\CC_p$ of $A$.  
	\item[{\bf (b)}]  The maximal eigenvalue $\lambda_\max$ is an element of the disc $D(n,1/p)$.
	\item[{\bf (c)}]  There exists a $\lambda_\max$-eigenvector $\x$ of $A$, all of whose components are elements of the disc $D(1,1/p)$.
	\item[{\bf (d)}]  The ($p$-adic) limit
\begin{equation*}
P_A=\lim_{k\to+\infty}\textstyle(\frac{1}{\lambda_\max}A)^k
\end{equation*} 
exists and $P_A$ is a projection operator onto the $\lambda_\max$-eigenspace of $A$.
\end{itemize}
\end{thm}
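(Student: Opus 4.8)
The plan is to exploit the congruence $A\equiv\Jbf\pmod p$, where $\Jbf$ is the $n\times n$ all-ones matrix, and to extract everything from the characteristic polynomial together with an iteration argument. First I would note that since every entry of $A$ lies in $D(1,1/p)\subset\ZZ_p$, we have $A\in M_n(\ZZ_p)$ with reduction $\bar A=\bar\Jbf$ over $\FF_p$. Hence $\chi_A(t)=\det(tI-A)$ is monic in $\ZZ_p[t]$ and reduces to $\chi_{\bar\Jbf}(t)=t^{n-1}(t-\bar n)$, the characteristic polynomial of the all-ones matrix. Because $p\nmid n$, the factors $t^{n-1}$ and $t-\bar n$ are coprime in $\FF_p[t]$, so Hensel's lemma in its factorization form lifts this to $\chi_A(t)=(t-\lambda_\max)\,g(t)$ in $\ZZ_p[t]$ with $\lambda_\max\equiv n\pmod p$ and $g(t)\equiv t^{n-1}\pmod p$. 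This gives $\lambda_\max\in D(n,1/p)$, which is part (b), and $|\lambda_\max|_p=1$ since $\lambda_\max\in\ZZ_p^\times$.

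For part (a) I would observe that $g(\lambda_\max)\equiv\lambda_\max^{\,n-1}\equiv n^{n-1}\not\equiv 0\pmod p$, so $\lambda_\max$ is not a root of $g$ and is therefore a simple root of $\chi_A$, i.e.\ a simple eigenvalue of $A$. Every other eigenvalue $\lambda\in\CC_p$ of $A$ is a root of $g$, and because $g$ is monic with all non-leading coefficients in $p\ZZ_p$, a one-line estimate on $\lambda^{n-1}=-(c_{n-2}\lambda^{n-2}+\dots+c_0)$ (or equivalently the Newton polygon of $g$) forces $|\lambda|_p<1$. Next, Cayley--Hamilton together with the coprimality of $t-\lambda_\max$ and $g$ yields the $A$-invariant decomposition $\QQ_p^n=E\oplus F$, where $E=\ker(A-\lambda_\max I)$ is the (one-dimensional) $\lambda_\max$-eigenspace and $F=\ker g(A)$, on which $A$ has characteristic polynomial $g$.

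Now set $M=\tfrac1{\lambda_\max}A\in M_n(\ZZ_p)$. With respect to $E\oplus F$ we have $M|_E=\id$, while the eigenvalues of $M|_F$ are the numbers $\lambda/\lambda_\max$ with $\lambda$ a root of $g$, all of absolute value $<1$. Using the non-Archimedean fact that a matrix all of whose eigenvalues have absolute value $<1$ has powers tending to $0$ — proved by triangularizing $M|_F$ over $\CC_p$ and conjugating by a suitable diagonal matrix to push the strictly-upper-triangular part below absolute value $1$, so that the operator norm of some conjugate of $M|_F$ is $<1$ — we get $(M|_F)^k\to 0$, hence $M^k$ converges to the projection $P_A$ of $\QQ_p^n$ onto $E$ along $F$. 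Since $E$ is exactly the $\lambda_\max$-eigenspace, this is part (d).

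Finally, for part (c) I would feed the iteration the all-ones vector $\1$. Reducing mod $p$, $M\equiv\tfrac1{\bar n}\bar\Jbf$, which is idempotent (as $\bar\Jbf^{\,2}=\bar n\,\bar\Jbf$) and fixes $\1$; thus $M^k\1\equiv\1\pmod p$ for every $k\ge1$, and passing to the limit gives $P_A\1\equiv\1\pmod p$. In particular $P_A\1\neq\0$, so $\x:=P_A\1\in E$ is a $\lambda_\max$-eigenvector all of whose components lie in $D(1,1/p)$. I expect the main obstacle to be making the convergence $(M|_F)^k\to 0$ fully rigorous: one must pin down the sense in which ``spectral radius $<1$'' implies that the operator norm of the powers tends to $0$ over a complete, discretely-valued, non-Archimedean field, passing to $\CC_p$ to triangularize and then rescaling. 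Once this analytic input is in place, the Hensel factorization, the primary decomposition, and the mod-$p$ bookkeeping for the eigenvector are all routine.
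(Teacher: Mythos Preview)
Your argument is correct and takes a genuinely different route from the paper's.  The paper proves Theorem~\ref{MainThmIntro} as the special case $K=\QQ_p$, $\pi=p$, $\ell=1$, $p\nmid n$ of its Theorem~\ref{MainThmGeneral}, and for that general statement it proceeds by explicit estimates: two preparatory lemmas bound $|\det A|$ and the coefficients $|c_j|$ of the characteristic polynomial, and then a Newton polygon computation isolates the segment of slope $-\ord_\pi(n)$ and length $1$, giving the simple eigenvalue $\lambda_{\max}$ with $|\lambda_{\max}|=|n|$.  Part~(b) is obtained by evaluating $|f(n)|$ in two ways, part~(c) by taking an arbitrary normalized eigenvector $\v$ and computing $\|\v-(v/\lambda_{\max})\1\|$ directly, and part~(d) by passing to Jordan form over $\CC_K$ and using that binomial coefficients have absolute value $\leq 1$.

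Your approach replaces the Newton polygon analysis by a single application of Hensel's lemma to the mod-$p$ factorization $\chi_{\bar A}(t)=t^{n-1}(t-\bar n)$, which immediately yields (a) and (b); your proof of (d) via upper-triangularization over $\CC_p$ and diagonal rescaling is a close cousin of the Jordan-block argument; and your derivation of (c) from (d), by tracking $M^k\1$ mod $p$, is an elegant shortcut that the paper does not use (the paper proves (c) before and independently of (d)).  The trade-off is that your Hensel step relies essentially on the coprimality of $t^{n-1}$ and $t-\bar n$ in $\FF_p[t]$, i.e.\ on $p\nmid n$, and so does not extend to the paper's more general Theorem~\ref{MainThmGeneral}, where $p\mid n$ is allowed provided $\ell>2\,\ord_\pi(n)$; in that regime the two factors collide modulo $\pi$, and the paper's quantitative Newton-polygon argument is what separates them.
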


\begin{rem}\label{GeneralizationRemark}
Theorem~\ref{MainThmIntro}, and its generalization Theorem~\ref{MainThmGeneral}, can be used to study a broader class of matrices than just those whose entries are near $1$.  For example, suppose that $B=(b_{ij})$ is an $n\times n$ matrix over $\QQ_p$, all of whose entries lie in a $p$-adic disc $D(\alpha,r)$ such that $0\not\in D(\alpha,r)$.  Then $|\alpha|_p>r$ and $|b_{ij}/\alpha-1|_p\leq r/|\alpha|_p<1$ for all $1\leq i,j\leq n$.  Therefore the matrix $A=\alpha^{-1}B$ satisfies the hypotheses of Theorem~\ref{MainThmIntro}, and the conclusions pertaining to $A$ easily imply corresponding statements about the matrix $B$.
\end{rem}

\begin{ex}
The $2\times 2$ matrix
\begin{equation*}
A=\begin{pmatrix}
4 & -5  \\
1 & 10
\end{pmatrix}
\end{equation*}
over $\QQ_3$ satisfies the hypotheses of Theorem~\ref{MainThmIntro}.  The characteristic polynomial of $A$ is
\begin{equation*}
f(x) = x^2 - 14x + 45=(x-9)(x-5).
\end{equation*}
Since $|9|_3=1/9$ and $|5|_3=1$, we have $\lambda_\max=5$.  The $5$-eigenvector ${-5\choose 1}$ of $A$ has both entries in $D(1,1/3)$, and the $3$-adic limit
\begin{equation*}
\lim_{k\to+\infty}\begin{pmatrix}
4/5 & -1  \\
1/5 & 2
\end{pmatrix}^k=
\begin{pmatrix}
5/4 & 5/4  \\
-1/4 & -1/4
\end{pmatrix}
\end{equation*}
exists and is equal to a projection operator onto the $5$-eigenspace of $A$.  The evaluation of this limit, as shown, follows from the diagonalization argument given in the proof of Theorem~\ref{MainThmIntro}; see especially $(\ref{ProjectionCalc})$.
\end{ex}

\begin{ex}\label{ExNotRational}
Theorem~\ref{MainThmIntro} guarantees that $\lambda_\max\in\QQ_p$, but of course it may happen that $\lambda_\max\notin\QQ$, as the example 
\begin{equation*}
A=\begin{pmatrix}
8 & 1  \\
1 & 1
\end{pmatrix}
\end{equation*}
over $\QQ_7$ shows.  This matrix satisfies the hypotheses of Theorem~\ref{MainThmIntro}, and its characteristic polynomial is $f(x) = x^2 -9x+7$, which is irreducible over $\QQ$.
\end{ex}

\begin{ex}\label{ExMinEVNotQpRational}
While $\lambda_\max\in\QQ_p$, the nonmaximal eigenvalues are not necessarily in $\QQ_p$.  Consider 
\begin{equation*}
A=\begin{pmatrix}
6 & 1 & -4 \\
1 & -4 & 6 \\
-4 & 6 & 1
\end{pmatrix}
\end{equation*}
over $\QQ_5$, which satisfies the hypotheses of Theorem~\ref{MainThmIntro}.  Its characteristic polynomial is $f(x) = x^3 -3x^2-75x+225=(x-3)(x^2-75)$.  Thus $\lambda_\max=3$, but $x^2-75$ is irreducible over $\QQ_5$.  Indeed, $3$ is not a square in $\QQ_5$ because it is not a square in the residue field $\FF_5=\{0,1,2,3,4\}$.  Therefore $75=5^2\cdot3$ is not a square in $\QQ_5$.
\end{ex}

\begin{ex}\label{CounterExIntro}
This example illustrates that, if the requirement $p\nmid n$ is omitted, then the theorem is false as written.  Let $p=n=2$ and consider the $2\times 2$ matrix
\begin{equation*}
A=\begin{pmatrix}
5/3 & 1  \\
1 & 7/3
\end{pmatrix}
\end{equation*}
over $\QQ_2$.  All of the entries are elements of the disc $D(1,1/2)$, and the characteristic polynomial of $A$ is
\begin{equation*}
f(x) = x^2 - 4x + 26/9.
\end{equation*}
A Newton polygon argument shows that the two eigenvalues $\lambda_1,\lambda_2\in\CC_2$ of $A$ both have absolute value $|\lambda_1|_2=|\lambda_2|_2=2^{-1/2}$.  Thus $A$ has no strictly maximal eigenvalue in $\CC_2$, and no eigenvalues at all in $\QQ_2$.
\end{ex}

Despite Example~\ref{CounterExIntro}, a version of Theorem~\ref{MainThmIntro} can be recovered, even when $p\mid n$, provided we strengthen the hypothesis that all entries of $A$ are near $1$.  Precisely, if we require that all entries of $A$ lie in the smaller disc $D(1,1/p^{\ell})$ for a positive integer $\ell>2\,\ord_p(n)$, then (a slightly modified version of) Theorem~\ref{MainThmIntro} continues to hold.  We also prove, in $\S$~\ref{SharpExampleSect}, that this result is sharp by giving a counterexample if the hypothesis $\ell>2\,\ord_p(n)$ is weakened.  

Since it requires no extra work, we prove our results in the setting of an arbitrary complete, discretely-valued, non-Archimedean field.  Thus in $\S$~\ref{PrelimSect} we briefly review some notation and basic facts about such fields, and we prove two crucial lemmas.  In $\S$~\ref{MainResultSect} we prove our main result, Theorem~\ref{MainThmGeneral}, of which Theorem~\ref{MainThmIntro} is a special case.  In $\S$~\ref{SharpExampleSect} we give a counterexample to our main result if the main hypothesis is weakened.

This work was done during the Summer 2015 REU program in Mathematics at Oregon State University, with support by National Science Foundation Grant DMS-1359173.

\section{Notation and Preliminary Lemmas}\label{PrelimSect}

Throughout this paper, $K$ denotes a field which is complete with respect to a nontrivial, non-Archimedean absolute value $|\cdot|$, and which has a discrete value group $|K^\times|$.  This means that, in addition to the usual axioms (\cite{MR1488696} $\S$2.1) satisfied by an absolute value, $|\cdot|$ also satisfies the strong triangle inequality 
\begin{equation}\label{STI}
|x+y|\leq\max(|x|,|y|),
\end{equation}
and a standard argument (\cite{MR1488696} Prop. 2.3.3) shows that 
\begin{equation}\label{STIEquality}
|x+y|=\max(|x|,|y|) \hskip1cm (\text{whenever } |x|\neq|y|).
\end{equation}

Let $R=\{x\in K\mid |x|\leq 1\}$ denote the ring of integers in $K$ and let $\pi$ be a uniformizing paramater.  This means that $\pi$ is an element of $R$ satisfying
\begin{equation*}
|\pi|=\max\{|x|\,\mid x\in R,|x|<1\},
\end{equation*} 
or equivalently, that $\pi R$ is the unique maximal ideal of $R$.  (Such an element $\pi$ exists by the assumption that $K$ is discretely valued.)  Denote by $\ord_\pi$ the discrete valuation on $K$ normalized so that $|x|=|\pi|^{\ord_\pi(x)}$ for all $x\in K$; thus $\ord_\pi(\pi)=1$.

Given an element $c\in K$ and a real number $r\geq0$, denote by
\begin{equation*}
D(c,r) = \{x\in K\,\mid\,|x-r|\leq r\}
\end{equation*} 
the closed disc in $K$ with center $c$ and radius $r$.

Given a matrix $A=(a_{ij})$ with entries in $K$, define its norm by
\begin{equation}\label{NormDef}
\|A\|=\max\,|a_{ij}|
\end{equation}
This defines a norm on the vector space $M_{m\times n}(K)$ of $m\times n$ matrices over $K$.  The strong triangle inequality implies the bound 
\begin{equation}\label{NormBound}
\|A B\|\leq\|A\|\|B\|
\end{equation}
whenever $A\in M_{m\times n}(K)$ and $B\in M_{n\times r}(K)$.

Let $\CC_K$ be the completion of the algebraic closure of $K$.  The field $\CC_K$ is both complete and algebraically closed, and the absolute value $|\cdot|$ extends uniquely to a non-Archimedean absolute value on $\CC_K$ \cite{bosch:nonarchanalysis}.  We extend the norm $\|\cdot\|$ to matrices defined over $\CC_K$ via the formula $(\ref{NormDef})$.

\begin{lem}\label{MainDetLem}
Let $A$ be an $n\times n$ matrix over $K$ with all entries in the disc $D(1,|\pi|^\ell)$ for some positive integer $\ell$.  Then 
\begin{equation}\label{SmallDeterminant}
|\det A|\leq|\pi|^{\ell(n-1)}.
\end{equation}
\end{lem}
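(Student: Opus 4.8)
The plan is to exploit the multilinearity of the determinant together with row operations, which are harmless for absolute values because all entries are integral. Write $A = (a_{ij})$ with every $a_{ij} \in D(1,|\pi|^\ell)$, so that $|a_{ij}| \le 1$ and $|a_{ij} - 1| \le |\pi|^\ell$. The key observation is that the difference of any two rows has all entries divisible by $\pi^\ell$: if $\r_i$ and $\r_j$ denote the $i$th and $j$th rows of $A$, then each component of $\r_i - \r_j$ has the form $a_{ik} - a_{jk} = (a_{ik} - 1) - (a_{jk} - 1)$, hence has absolute value at most $|\pi|^\ell$ by the strong triangle inequality $(\ref{STI})$.

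Next I would perform the row operations $\r_i \mapsto \r_i - \r_1$ for $i = 2, 3, \dots, n$, which do not change the determinant. Call the resulting matrix $A'$. Its first row still has all entries of absolute value $\le 1$, while each of its remaining $n-1$ rows has all entries of absolute value $\le |\pi|^\ell$. Now expand $\det A' = \det A$ by the Leibniz formula $\det A' = \sum_{\sigma \in S_n} \sgn(\sigma)\prod_{i=1}^n a'_{i\sigma(i)}$. In each term of this sum, one factor comes from row $1$ (contributing a factor of absolute value $\le 1$) and the other $n-1$ factors come from rows $2,\dots,n$ (each contributing a factor of absolute value $\le |\pi|^\ell$). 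Hence every term has absolute value at most $|\pi|^{\ell(n-1)}$, and by the strong triangle inequality $(\ref{STI})$ applied to the whole sum, $|\det A| = |\det A'| \le |\pi|^{\ell(n-1)}$, which is the desired bound $(\ref{SmallDeterminant})$.

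There is really no serious obstacle here; the only point requiring a little care is making sure the row operation is legitimate, namely that subtracting one row from another genuinely preserves the determinant (standard) and that one is allowed to bound a sum of products using only the ultrametric inequality rather than worrying about cancellation — which is exactly where the non-Archimedean hypothesis pays off, since over $\RR$ one would have no such clean bound. One could alternatively phrase the argument via column operations, or avoid the Leibniz expansion by noting that $\det A'$ equals (up to sign) $a'_{1k}$ times a cofactor which is the determinant of an $(n-1)\times(n-1)$ matrix all of whose entries are divisible by $\pi^\ell$, and hence is divisible by $\pi^{\ell(n-1)}$; but the direct Leibniz estimate is the most transparent.
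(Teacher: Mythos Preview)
Your proof is correct and takes a genuinely different route from the paper's. The paper writes $A=(1+\pi^\ell a_{ij})$, expands $\det A$ directly via the Leibniz formula, and then expands each product $\prod_i(1+\pi^\ell a_{i\sigma(i)})$ over subsets $I\subseteq\{1,\dots,n\}$. This yields sums $S(I)=\sum_\sigma\sgn(\sigma)\prod_{i\in I}a_{i\sigma(i)}$, and the heart of the argument is a combinatorial cancellation: whenever $|I|\le n-2$ one can pair off terms using a transposition fixing $I$, so $S(I)=-S(I)$ and hence $S(I)=0$. Only the terms with $|I|\ge n-1$ survive, and these carry at least $\pi^{\ell(n-1)}$. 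Your approach sidesteps this cancellation entirely by building it into row operations: subtracting the first row from all others makes $n-1$ rows uniformly small, after which the Leibniz bound is immediate. This is shorter and more transparent for the lemma at hand. What the paper's more elaborate expansion buys is reusability: the same subset-indexed machinery (and the identification of $T(I)$ with minors) is recycled in the next lemma to control the coefficients of the characteristic polynomial $\det(xI-A)$, where simple row subtraction is less directly applicable. One small quibble: your closing parenthetical about cofactor expansion should read as a \emph{sum} over $k$ of $a'_{1k}$ times an $(n-1)\times(n-1)$ minor, not a single term; but this is a side remark and does not affect your main argument.
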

\begin{proof}
By hypothesis we may write
\begin{equation}\label{AWrittenNear1}
A=(1+\pi^\ell a_{ij}) \hskip1cm (a_{ij}\in R\text{ for }1\leq i,j\leq n).
\end{equation}
Let $S_n$ denote the group of permutations of the index set $\{1,\dots,n\}$, and let $I$ denote an arbitrary subset of $\{1,\dots,n\}$.  We have
\begin{equation}\label{split_off_n_minus_2}
\begin{split}
	\det (A) & = \sum_{\sigma\in S_n} {\sgn(\sigma) \prod_{i=1}^n (1 + \pi^\ell a_{i\sigma(i)})} \\
	& = \sum_{\sigma\in S_n} {\sgn(\sigma) \sum_{\, I\subseteq \{1,\dots,n\} \, } {\prod_{i\in I} {\pi^\ell a_{i\sigma(i)}}}} \\
	& = \sum_{k=0}^n {\sum_{\substack{ \, I\subseteq\{1,\dots,n\}  \\   |I| = k }} {\pi^{\ell k} S(I) }},
\end{split}
\end{equation}
where for each $I\subseteq\{1,\dots,n\}$ we define
\begin{align*}
S(I) = \sum_{\sigma\in S_n} { \sgn(\sigma) \prod_{i\in I} {a_{i\sigma(i)}} }.
\end{align*}

We will now show that 
\begin{equation}\label{SVanishes}
S(I)  = 0 \hskip1cm (0\leq |I|\leq n-2).
\end{equation}
For if $0\leq |I|\leq n-2$, then the set $\{1,\dots,n\}\setminus I$ has at least two elements, so there exists a transposition $\epsilon\in S_n$ that fixes each element of $I$. Then since the map $\sigma\mapsto \sigma\epsilon$ is a bijection from $S_n$ to itself, we have
\begin{equation*}
\begin{split}
S(I) & = \sum_{\sigma\in S_n} { \sgn(\sigma\epsilon) \prod_{i\in I} {a_{i{\sigma\epsilon}(i)}} } \\
& = \sgn(\epsilon) \sum_{\sigma\in S_n} { \sgn(\sigma) \prod_{i\in I} {a_{i{\sigma}(i)}} } \label{sigma_epsilon} \\
& = -S(I).
\end{split}
\end{equation*}
which implies $(\ref{SVanishes})$. (Of course, the equality $x=-x$ does not imply $x=0$ in a field of characteristic $2$.  However, one may view the proof of $(\ref{SVanishes})$ as taking place in the polynomial ring $\ZZ[a_{ij}]$ in $n^2$ doubly indexed indeterminates $a_{ij}$, which implies that the identity holds in arbitrary characteristic.)

Because of $(\ref{SVanishes})$ the identity \eqref{split_off_n_minus_2} 
becomes
\begin{equation}\label{simpified_S_of_I}
\det(A) = \sum_{k=n-1}^{n} {\sum_{\substack{ \, I\subseteq\{1,\dots,n\} \\   |I| = k }} {\pi^{\ell k} S(I)}}.
\end{equation}
When $n-1\leq k \leq n$, it follows from the strong triangle inequality that 
\begin{equation}\label{ILargeBound}
|\pi^{\ell k}S(I)| \leq |\pi|^{\ell k}\leq |\pi|^{\ell(n-1)},
\end{equation}
and we conclude from $(\ref{simpified_S_of_I})$ and $(\ref{ILargeBound})$ that $|\det A|\leq|\pi|^{\ell(n-1)}$.
\end{proof}

\begin{lem}\label{MainCoeffLem}
Let $A$ be an $n\times n$ matrix over $K$ with all entries in the disc $D(1,|\pi|^\ell)$ for some positive integer $\ell$, and let
\begin{equation*}
f(x) = x^n+c_{n-1}x^{n-1}+\dots+c_1x+c_0
\end{equation*}
be the characteristic polynomial of $A$.  Then 
\begin{equation}\label{SmallCoefficients}
|c_j|\leq|\pi|^{\ell(n-j-1)} \hskip1cm (0\leq j\leq n-1).
\end{equation}
Moreover, if $\ell>\ord_\pi(n)$, then 
\begin{equation}\label{SmallCoefficientsTraceId}
|c_{n-1}|=|n|.
\end{equation}
\end{lem}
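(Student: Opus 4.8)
The plan is to exploit the fact that each coefficient $c_j$ of the characteristic polynomial is, up to sign, the sum of the $(n-j)\times(n-j)$ principal minors of $A$. Precisely, writing $f(x) = \det(xI - A)$, one has
\begin{equation*}
c_j = (-1)^{n-j}\sum_{\substack{J\subseteq\{1,\dots,n\}\\ |J| = n-j}} \det(A_J),
\end{equation*}
where $A_J$ denotes the principal submatrix of $A$ obtained by keeping the rows and columns indexed by $J$. Since every entry of $A$ lies in $D(1,|\pi|^\ell)$, each principal submatrix $A_J$ is an $(n-j)\times(n-j)$ matrix satisfying the hypothesis of Lemma~\ref{MainDetLem} (with $n$ replaced by $n-j$), so $|\det(A_J)|\leq|\pi|^{\ell(n-j-1)}$. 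Applying the strong triangle inequality $(\ref{STI})$ to the sum over all such $J$ immediately yields $(\ref{SmallCoefficients})$.

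For the refinement $(\ref{SmallCoefficientsTraceId})$, I would specialize to $j = n-1$, where $c_{n-1} = -\tr(A) = -\sum_{i=1}^n a_{ii}$. Writing $a_{ii} = 1 + \pi^\ell b_{ii}$ with $b_{ii}\in R$ as in $(\ref{AWrittenNear1})$, we get $c_{n-1} = -n - \pi^\ell\sum_{i=1}^n b_{ii}$. The second term has absolute value at most $|\pi|^\ell$, while $|n|$ satisfies $|n| = |\pi|^{\ord_\pi(n)}$; the hypothesis $\ell > \ord_\pi(n)$ gives $|\pi|^\ell < |n|$, so the two terms have distinct absolute values and $(\ref{STIEquality})$ forces $|c_{n-1}| = |{-n}| = |n|$.

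I do not anticipate a serious obstacle here: the only point requiring a little care is the passage from the full determinant estimate of Lemma~\ref{MainDetLem} to the principal-minor expansion of the characteristic polynomial, i.e.\ making sure the combinatorial identity for $c_j$ in terms of principal minors is invoked correctly and that each principal submatrix genuinely inherits the near-$1$ hypothesis (which it does, trivially, since its entries are a subset of the entries of $A$). The characteristic-2 subtlety that arose in Lemma~\ref{MainDetLem} does not reappear, because here we are only taking absolute values of sums, not cancelling terms.
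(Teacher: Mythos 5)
Your proposal is correct and follows essentially the same route as the paper: both reduce the coefficient bound $(\ref{SmallCoefficients})$ to Lemma~\ref{MainDetLem} via the identity expressing $c_j$ as a signed sum of $(n-j)\times(n-j)$ principal minors of $A$, and both prove $(\ref{SmallCoefficientsTraceId})$ by writing $c_{n-1}=-\tr(A)=-n-\pi^\ell\sum_i b_{ii}$ and invoking $(\ref{STIEquality})$. The only cosmetic difference is that you cite the principal-minor formula as known, whereas the paper rederives it in lines $(\ref{CharPolyCalc1})$--$(\ref{CharPolyCalcCoeff2})$ by expanding $\det(\delta_{ij}x-(1+\pi^\ell a_{ij}))$ over permutations and subsets; the mathematical content is the same.
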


\begin{proof}
First we establish some notation.  Given positive integers $i$ and $j$, let $\delta_{ij}$ represent the Kronecker delta function of $i$ and $j$. Given a permutation $\sigma\in S_n$, let $\Fix(\sigma)=\{i\in\{1,\dots,n\}\mid \sigma(i)=i\}$ denote the set of indices that are fixed by $\sigma$. Given a subset $I\subseteq\{1,\dots,n\}$, let $I^c=\{1,\dots,n\}\setminus I$.

Again with $A$ written as in $(\ref{AWrittenNear1})$, we have
\begin{equation}\label{CharPolyCalc1}
\begin{split}
f(x) & = \det(\delta_{ij}x-(1+\pi^\ell a_{ij})) \\
	& = \sum_{\sigma\in S_n}\sgn(\sigma)\prod_{i=1}^n(\delta_{i\sigma(i)}x-(1+\pi^\ell a_{i\sigma(i)})) \\
	& =\sum_{\sigma\in S_n}\sgn(\sigma)\sum_{I\subseteq \{1,\dots,n\}}\bigg(\prod_{i\in I}\delta_{i\sigma(i)}x\bigg)\bigg(\prod_{i\in I^c}(-1)(1+\pi^\ell a_{i\sigma(i)})\bigg) \\ 
	& = \sum_{\sigma\in S_n}\sgn(\sigma)\sum_{I\subseteq \Fix(\sigma)}(-1)^{|I^c|}\bigg(\prod_{i\in I^c}(1+\pi^\ell a_{i\sigma(i)})\bigg)x^{|I|} \\
	& = \sum_{I\subseteq \{1,\dots,n\}}T(I)x^{|I|}\end{split}
\end{equation}
where
\begin{equation}\label{CharPolyCalcCoeff}
T(I)= (-1)^{|I^c|}\sum_{\stackrel{\sigma\in S_n}{I\subseteq \Fix(\sigma)}}\sgn(\sigma)\prod_{i\in I^c}(1+\pi^\ell a_{i\sigma(i)}).
\end{equation}
The second-to-last equality in $(\ref{CharPolyCalc1})$ follows from the fact that, because of the Kronecker delta factors, the $(\sigma,I)$-th summand vanishes unless $\sigma$ fixes every element of $I$.  

Note that if we let $A_I$ denote the $(n-|I|)\times(n-|I|)$ matrix formed by removing the $i$-th row and $j$-th column from $A$ for each $i,j\in I$, then $(\ref{CharPolyCalcCoeff})$ becomes
\begin{equation}\label{CharPolyCalcCoeff2}
T(I)= (-1)^{|I^c|}\sum_{\sigma\in \Perm(I^c)}\sgn(\sigma)\prod_{i\in I^c}(1+\pi^\ell a_{i\sigma(i)}) = (-1)^{|I^c|}\det A_I.
\end{equation}
It follows that for $0\leq j\leq n-1$, the coefficient $c_j$ of $f(x)$ can be calculated by adding and subtracting the determinants of $(n-j)\times(n-j)$ matrices with entries in $1 + \pi^\ell R$; Lemma~\ref{MainDetLem} implies that each of these determinants has an absolute value of at most $|\pi|^{\ell(n-j-1)}$, and the desired bound $(\ref{SmallCoefficients})$ follows.

Finally, if $\ell>\ord_\pi(n)$, then since $c_{n-1}$ is the negative of the trace of $A$, we have
\begin{equation*}
|c_{n-1}| = |n+\pi^\ell(a_{11}+\dots+a_{nn})|=|n|
\end{equation*}
by $(\ref{STIEquality})$ and the fact that $|\pi^\ell(a_{11}+\dots+a_{nn})|\leq |\pi|^\ell<|n|$.
\end{proof}

\section{The main result}\label{MainResultSect}

\begin{thm}\label{MainThmGeneral}
Let $A$ be an $n\times n$ matrix over $K$ with all entries in the disc $D(1,|\pi|^\ell)$ for a positive integer $\ell>2\,\ord_\pi(n)$.  Then the following conditions hold.
\begin{itemize}
	\item[{\bf (a)}]  $A$ has a simple eigenvalue $\lambda_\max\in K$ such that $|\lambda|<|\lambda_\max|$ for all other eigenvalues $\lambda\in\CC_K$ of $A$.  Moreover, $|\lambda_\max|=|n|$.
	\item[{\bf (b)}]  The maximal eigenvalue $\lambda_\max$ is an element of the disc $D(n,|\pi|^\ell/|n|)$. 
	\item[{\bf (c)}]  There exists a $\lambda_\max$-eigenvector $\x$ of $A$, all of whose components are elements of the disc $D(1,|\pi|^\ell/|n|)$.
	\item[{\bf (d)}]  The limit
\begin{equation*}
P_A=\lim_{k\to+\infty}\textstyle(\frac{1}{\lambda_\max}A)^k
\end{equation*} 
exists and $P_A$ is a projection operator onto the $\lambda_\max$-eigenspace of $A$.
\end{itemize}
\end{thm}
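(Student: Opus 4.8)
The plan is to work with the characteristic polynomial $f(x)=x^n+c_{n-1}x^{n-1}+\dots+c_0$ and exploit the coefficient bounds from Lemma~\ref{MainCoeffLem} via a Newton polygon argument. Since $\ell>2\,\ord_\pi(n)$, write $m=\ord_\pi(n)$, so $|c_{n-1}|=|n|=|\pi|^m$ by \eqref{SmallCoefficientsTraceId}, while for $0\le j\le n-2$ the bound \eqref{SmallCoefficients} gives $\ord_\pi(c_j)\ge \ell(n-j-1)\ge \ell\cdot 2 > 2m$ when $j\le n-2$ (more precisely $\ord_\pi(c_j)\ge \ell(n-1-j)$, which is $\ge 2\ell$ for $j\le n-2$). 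Plotting the points $(j,\ord_\pi(c_j))$ for $0\le j\le n$ (with $\ord_\pi(c_n)=0$), the vertex at $j=n-1$ sits at height $m$, and all other vertices with $j<n-1$ lie strictly above the line through $(n,0)$ and $(n-1,m)$ — this is exactly where the hypothesis $\ell>2m$ is needed, since the worst case is $j=n-2$ at height $\ge 2\ell>2m$, and the line through $(n-1,m)$ and $(n-2, \text{height})$ must have slope $>m$. Hence the Newton polygon of $f$ has a single segment of slope $m$ from $(n-1,0)$ to $(n,0)$ [relative abscissa] and a single segment of slope $>m$ covering $0\le j\le n-1$. Therefore $f$ has exactly one root $\lambda_{\max}\in\CC_K$ with $\ord_\pi(\lambda_{\max})=m$, i.e. $|\lambda_{\max}|=|n|$, and $n-1$ roots of strictly larger valuation, i.e. strictly smaller absolute value. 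This root is simple (it is alone on its Newton segment) and, being the unique root of its absolute value, is Galois-fixed, hence lies in $K$. This proves (a).

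For (b), having located $\lambda_{\max}\in K$ with $|\lambda_{\max}|=|n|$, I would substitute into $f$: since $f(\lambda_{\max})=0$ and $\lambda_{\max}^n + c_{n-1}\lambda_{\max}^{n-1}=-\sum_{j\le n-2}c_j\lambda_{\max}^j$, and every term on the right has absolute value $\le |\pi|^{\ell(n-1-j)}|n|^j \le |\pi|^{2\ell}|n|^{n-2}$ (estimating crudely), I can solve for $\lambda_{\max}+c_{n-1}$ up to a controlled error. Writing $c_{n-1}=-(n+\pi^\ell\,\mathrm{tr}')$ with $|\pi^\ell\,\mathrm{tr}'|\le|\pi|^\ell$, a short manipulation dividing through by $\lambda_{\max}^{n-1}$ yields $|\lambda_{\max}-n|\le |\pi|^\ell/|n|$, which is the claimed disc $D(n,|\pi|^\ell/|n|)$. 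For (c), the eigenspace is one-dimensional (simple eigenvalue), so I pick any nonzero eigenvector and normalize so that its largest-absolute-value component equals $1$; then I need to show every component lies in $D(1,|\pi|^\ell/|n|)$. This follows by examining the eigenvector equation $A\x=\lambda_{\max}\x$ row by row: writing $A=(1+\pi^\ell a_{ij})$, the $i$th row reads $\sum_j x_j + \pi^\ell\sum_j a_{ij}x_j = \lambda_{\max}x_i$, so $(\sum_j x_j) = \lambda_{\max}x_i - \pi^\ell\sum_j a_{ij}x_j$; since the left side is independent of $i$, comparing two rows and using $|\lambda_{\max}|=|n|$, $|x_j|\le 1$ gives $|n||x_i - x_{i'}|\le|\pi|^\ell$ for all $i,i'$; combined with some $x_i$ having absolute value $1$ this forces all $x_i\in D(1,|\pi|^\ell/|n|)$ after a suitable global rescaling of the normalization (replacing $1$ by the common "near-$1$" value).

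For (d), since $\lambda_{\max}$ is a simple eigenvalue and the remaining eigenvalues $\lambda$ satisfy $|\lambda|<|\lambda_{\max}|$, I would pass to $\CC_K$ and put $A$ in a form where $\frac{1}{\lambda_{\max}}A$ is block-triangular (or block-diagonal) with a $1\times1$ block equal to $1$ and a complementary block $N$ all of whose eigenvalues have absolute value $<1$; the key sublemma is that such an $N$ satisfies $\|N^k\|\to 0$ as $k\to\infty$. This last point needs care in the non-Archimedean setting: I would argue that after conjugating by a suitable diagonal matrix (rescaling basis vectors by powers of $\pi$) one can assume $\|N\|<1$ — or, more robustly, use that the characteristic polynomial of $N$ has all roots of absolute value $<1$, hence (by a Newton-polygon / integrality argument) $N$ satisfies a monic polynomial relation $N^d = \pi\cdot(\text{matrix over }R)$-type bound, forcing $\|N^{k}\|\to 0$. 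Then $(\frac{1}{\lambda_{\max}}A)^k$ converges to the block-diagonal matrix with the $1\times1$ identity block and zero elsewhere, which is precisely the projection onto the $\lambda_{\max}$-eigenspace along the sum of the other generalized eigenspaces; finally descend the conclusion back to $K$ since the limit, being a limit of matrices over $K$, lies over $K$. I expect the main obstacle to be step (d): specifically, establishing cleanly that a matrix over $\CC_K$ all of whose eigenvalues have absolute value $<1$ is "topologically nilpotent" ($\|N^k\|\to 0$), since unlike the Archimedean case one cannot simply invoke the spectral radius formula without first justifying a non-Archimedean analogue (via the change of basis making the norm submultiplicative and $<1$, which is the content one must supply carefully, perhaps by triangularizing over $\CC_K$ and scaling).
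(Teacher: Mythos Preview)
Your outline is essentially correct and follows the paper's strategy for all four parts, with one arithmetic slip and one point where the paper's route is cleaner.

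\medskip

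\textbf{Part (a).} Your Newton polygon argument matches the paper's, but the claim ``$\ord_\pi(c_j)\ge \ell(n-1-j)\ge 2\ell$ for $j\le n-2$'' is wrong at $j=n-2$, where the bound is only $\ord_\pi(c_{n-2})\ge \ell$. This does not hurt you: what you actually need is that the point $(n-2,\ord_\pi(c_{n-2}))$ lies strictly above the line through $(n,0)$ and $(n-1,m)$, whose height at $x=n-2$ is $2m$; and $\ell>2m$ suffices. The paper carries out exactly this check (line $L$ with equation $y=\ord_\pi(n)(n-x)$, then verifies $(i,\ord_\pi(c_i))$ lies above $L$ for all $0\le i\le n-2$), and then invokes the factorization form of the Newton polygon theorem to get $\lambda_{\max}\in K$ directly rather than appealing to Galois invariance.

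\medskip

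\textbf{Parts (b) and (c).} Your approach agrees with the paper's. For (c) the paper normalizes to $\|\v\|=1$, writes $A\v=(v_1+\dots+v_n)\1+\pi^\ell A'\v$, and sets $\x=(\lambda_{\max}/v)\v$ where $v=\sum v_j$; your ``compare two rows'' variant is equivalent after the rescaling you indicate.

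\medskip

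\textbf{Part (d).} The paper avoids the spectral-radius subtlety you flag by passing to Jordan canonical form over $\CC_K$ and computing $J_i^k$ explicitly: the $(r,s)$-entry of $(\lambda_{\max}^{-1}J_i)^k$ is $\binom{k}{s-r}\mu_i^{k-(s-r)}\lambda_{\max}^{-(s-r)}$ with $\mu_i=\lambda_i/\lambda_{\max}$, and the key point is simply that binomial coefficients are integers, hence have absolute value $\le 1$. So each such entry tends to $0$ because $|\mu_i|<1$, with no need to rescale a basis or invoke a non-Archimedean spectral radius formula. Your triangularize-then-rescale plan would also work, but the explicit Jordan computation is shorter and sidesteps the issue you were worried about.
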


\begin{rem}
Theorem~\ref{MainThmIntro} is the special case of Theorem~\ref{MainThmGeneral} in which $K=\QQ_p$, $\pi=p$, $\ell=1$, and $p\nmid n$.
\end{rem}

\begin{rem}
It is implicit in the hypothesis $\ell>2\,\ord_\pi(n)$ that $n$ is not divisible by the characteristic of $K$, for if $\ch(K)\mid n$ then one has $\ord_\pi(n)=\ord_\pi(0)=+\infty$.
\end{rem}

\begin{rem}
The hypothesis $\ell>2\,\ord_\pi(n)$ can be written in the form $|n|>|\pi|^{\ell/2}$, which implies in particular that 
\begin{equation}\label{BoundsAreNontrivial}
|\pi|^{\ell}/|n|<|\pi|^{\ell/2}<1.
\end{equation}
The inequalities $(\ref{BoundsAreNontrivial})$ show that the bounds quoted in parts (b) and (c) of the Theorem are nontrivial and can be made independent of the dimension $n$.
\end{rem}

\begin{proof}[Proof of Theorem~\ref{MainThmGeneral} (a)]
In order to show the existence of a strictly maximal eigenvalue $\lambda_\max\in K$ of $A$ of multiplicity one, we require an analysis of the characteristic polynomial 
\begin{equation*}
f(X)=x^n+c_{n-1}x^{n-1}+\dots+c_1x+c_0
\end{equation*}
of $A$ via its Newton polygon.  Recall that the Newton polygon of $f(x)$ is the lower convex hull of the set of points 
\begin{equation*}
(0,\ord_\pi(c_0)), (1,\ord_\pi(c_1)), \dots, (n-1,\ord_\pi(c_{n-1})), (n,0)
\end{equation*}
in the $xy$-plane.  (See \cite{MR861410} $\S$6.3 or \cite{MR1488696} $\S$6.4.)  A partial Newton polygon for $f(X)$ is depicted in Figure~\ref{newton2}.

\begin{figure}[hbt]
\begin{center}
\includegraphics{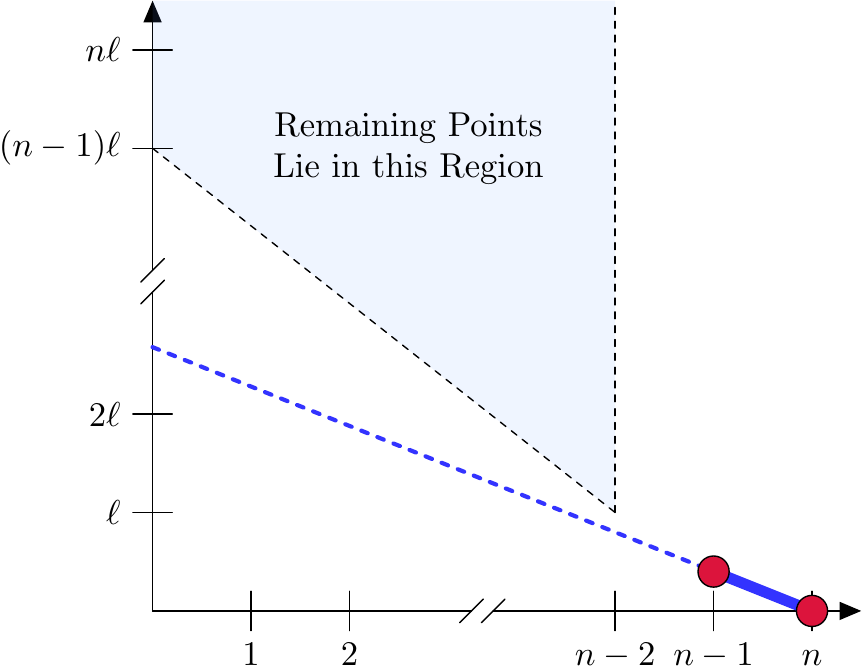}
\caption{The Newton polygon for $f(X)$.}
\label{newton2}
\end{center}
\end{figure}

From Lemma~\ref{MainCoeffLem} and the hypothesis $\ell>2\,\ord_\pi(n)$, we have 
\begin{equation}\label{SmallCoeffs}
\ord_\pi(c_{i}) \geq \ell(n-i-1) \hskip1cm (0 \leq i \leq n-2)
\end{equation}
and
\begin{equation}\label{LargeCoeff}
\ord_\pi(c_{n-1}) = \ord_\pi(n)<\ell/2.
\end{equation}
Observe that the line segment connecting $(n-1, \ord_\pi(c_{n-1}))$ with $(n,0)$ has slope
\begin{equation*}
m=\frac{0 - \ord_\pi(c_{n-1})}{n-(n-1)} = -\ord_\pi(c_{n-1}) =-\ord_\pi(n)> -\ell/2.
\end{equation*}
Extending this line segment to the left until it intersects the $y$ axis, this line, call it $L$, has equation $y=\ord_\pi(n)(n-x)$.  For $0 \leq i \leq n-2$, we bound the $y$-coordinate of the point on the line $L$ with $x$-coordinate equal to $i$:
\begin{equation}\label{NewtonPolySlopeCalc}
\begin{split}
\ord_\pi(n) (n-i) &< (\ell/2) (n-i) \\
& =  (\ell/2) (n-i-1) + \ell/2 \\
&\leq \ord_\pi(c_{i})/2 + \ell/2 \\
&\leq \ord_\pi(c_{i})/2 + \ord_\pi(c_{i})/2 \\
&= \ord_\pi(c_{i}).
\end{split}
\end{equation}
Here we have used $(\ref{LargeCoeff})$, then $(\ref{SmallCoeffs})$, and finally the inequality $\ell\leq\ord_\pi(c_i)$, which follows from $(\ref{SmallCoeffs})$ and the fact that $n-i-1$ is a positive integer.

The inequality $(\ref{NewtonPolySlopeCalc})$ shows that the line $L$ lies strictly below all of the left-most $n-1$ points $(i,\ord_\pi(c_i))$, $0\leq i\leq n-2$, of the Newton polygon of $f(x)$ (see Figure~\ref{newton2}).  Since the distinct slopes of the segments of the Newton polygon are increasing from left to right, it follows that the slopes of all of the segments of the Newton polygon $f(x)$ to the left of the point $(n-1, \ord_\pi(c_{n-1}))$ are strictly less than the slope $m=-\ord_\pi(c_{n-1})$ of the segment from $(n-1, \ord_\pi(c_{n-1}))$ to $(n,0)$.  Since this final segment has horizontal length $1$, it follows from the Theorem of the Newton Polygon (\cite{MR861410} Thm. 6.3.1)  that $f(x)$ factors over $K$ as $f(x)=g(x)(x-\lambda_\max)$ for $\lambda_\max\in K$ satisfying 
\begin{equation}\label{MaxEigenIdent}
|\lambda_\max|=|\pi|^{-m}=|\pi|^{\ord_\pi(c_{n-1})}=|c_{n-1}|=|n|,
\end{equation}
and that all roots $\lambda$ of $g(x)$ in $\CC_K$ have absolute value $|\lambda|<|\lambda_\max|$.
\end{proof}

\begin{proof}[Proof of Theorem~\ref{MainThmGeneral} (b)]
Let $\lambda_1,\dots,\lambda_{n-1}\in \CC_K$ be the non-maximal eigenvalues of $A$.  Then $|n-\lambda_i|=|n|$ for each $1\leq i\leq n-1$, by $(\ref{STIEquality})$ and the fact that $|\lambda_i|<|\lambda_\max|=|n|$.  So on the one hand we have
\begin{equation*}
\begin{split}
|f(n)|= |n-\lambda_1|\dots|n-\lambda_{n-1}||n-\lambda_\max|=|n|^{n-1}|n-\lambda_\max|.
\end{split}
\end{equation*}
On the other hand, with $A$ written as in $(\ref{AWrittenNear1})$, using Lemma~\ref{MainCoeffLem} and the fact that $c_{n-1}$ is the negative of the trace of $A$, we have
\begin{equation*}
\begin{split}
|f(n)| & = |n^n -(n+\pi^\ell(a_{11}+\dots+a_{nn}))n^{n-1} + c_{n-2}n^{n-2}+\dots+c_1n+c_0| \\
	& = |-\pi^\ell(a_{11}+\dots+a_{nn})n^{n-1} + c_{n-2}n^{n-2}+\dots+c_1n+c_0| \\
	& \leq\max(|\pi|^\ell|n|^{n-1},|c_{n-2}||n|^{n-2},\dots,|c_1||n|,|c_0|) \\
	& \leq\max(|\pi|^{\ell}|n|^{n-1},|\pi|^{\ell}|n|^{n-2},|\pi|^{2\ell}|n|^{n-3},\dots,|\pi|^{(n-2)\ell}|n|,|\pi|^{(n-1)\ell}) \\
	& =|\pi|^{\ell}|n|^{n-2}.
\end{split}
\end{equation*}
In this inequality, the evaluation of the maximum as $|\pi|^{\ell}|n|^{n-2}$ follows from an elementary argument and the hypothesis that $\ell>2\,\ord_\pi(n)$, which implies $|n|>|\pi|^{\ell/2}$.  Combining these last two calculations for $|f(n)|$ we deduce the desired inequality 
$|n-\lambda_\max|\leq |\pi|^{\ell}/|n|$.  
\end{proof}

\begin{proof}[Proof of Theorem~\ref{MainThmGeneral} (c)]
Let $\v=(v_i)\in K^n$ be an arbitrary $\lambda_\max$-eigenvector.  Normalizing by a suitable nonzero scalar, we may assume without loss of generality that $\|\v\|=1$.  Let $\1$ denote the $n$-dimensional column vector, all of whose entries are equal to $1$, and let $1_{n\times n}$ denote the $n\times n$ matrix, all of whose entries are equal to $1$.  With $A$ written as in $(\ref{AWrittenNear1})$, we have
\begin{equation*}
A=(1+\pi^\ell a_{ij})=1_{n\times n}+\pi^\ell A'
\end{equation*}
where $A'=(a_{ij})$ and $\|A'\|\leq1$.  Thus
\begin{equation*}
\begin{split}
\lambda_\max\v & = A\v \\
	& = 1_{n\times n}\v + \pi^\ell A'\v \\
	& = v\1 + \pi^\ell A'\v,
\end{split}
\end{equation*}
where $v=v_1+v_2+\dots+v_n$.  We then have
\begin{equation*}
\begin{split}
|v/\lambda_\max| & = \|(v/\lambda_\max)\1\| \\
	& = \|\v-(\pi^\ell/\lambda_\max)A'\v\| \\
	& = 1.
\end{split}
\end{equation*}
This last equality follows from $(\ref{STIEquality})$, because the largest entry of $\v$ has absolute value $1$, while all entries of $(\pi^\ell/\lambda_\max)A'\v$ have absolute value at most
\begin{equation*}
\|(\pi^\ell/\lambda_\max)A'\v\| \leq |\pi^\ell/\lambda_\max|\|A'\|\|\v\|\leq |\pi|^\ell/|n|\leq |\pi|^{\ell/2}<1
\end{equation*}
using $(\ref{BoundsAreNontrivial})$.  In particular, $v$ is nonzero and $|v|=|\lambda_\max|=|n|$.

We now set $\x=(\lambda_\max/v)\v$.  Then $\x$ is a $\lambda_\max$-eigenvector of $A$ and 
\begin{equation*}
\|\x-\1\|=\|(\pi^\ell/v) A'\v\|\leq (|\pi|^\ell/|n|)\|A'\|\|\v\|\leq|\pi|^\ell/|n|
\end{equation*}
as desired.\end{proof}

In view of part (a) of Theorem~\ref{MainThmGeneral}, part (d) follows from the following theorem which holds in somewhat greater generality.

\begin{thm}
Let $A$ be an $n\times n$ matrix with entries in $K$, and suppose that $A$ has a simple eigenvalue $\lambda_\max\in K$ with the property that $|\lambda|<|\lambda_\max|$ for all other eigenvalues $\lambda\in\CC_K$ of $A$.  Then the limit
\begin{equation}\label{MatrixLimit}
P_A=\lim_{k\to+\infty}\textstyle(\lambda_\max^{-1}A)^k
\end{equation} 
exists and $P_A\in M_{n\times n}(K)$ is a projection operator onto the $\lambda_\max$-eigenspace of $A$.
\end{thm}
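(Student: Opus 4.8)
The plan is to reduce everything to one non-Archimedean linear-algebra fact about powers of a contracting matrix, and then finish with a standard primary-decomposition argument over $K$.

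First I would isolate the crux as a sub-lemma: \emph{if $B$ is an $m\times m$ matrix over $\CC_K$, all of whose eigenvalues have absolute value $<1$, then $\lim_{k\to+\infty}\|B^k\|=0$.} To prove this I would conjugate $B$, over the algebraically closed field $\CC_K$, into upper-triangular form $B=QTQ^{-1}$, so that by the submultiplicativity $(\ref{NormBound})$ of $\|\cdot\|$ it suffices to show $\|T^k\|\to0$. Writing $\rho=\max_i|t_{ii}|<1$ and $M=\|T\|$, the $(i,j)$-entry of $T^k$ is a sum of products $t_{i\,i_1}t_{i_1i_2}\cdots t_{i_{k-1}\,j}$ ranging over weakly increasing chains $i\le i_1\le\dots\le i_{k-1}\le j$ (all other terms vanish because $T$ is upper-triangular). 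In any such chain the index strictly increases at most $m-1$ times, so at least $k-(m-1)$ of the $k$ factors are diagonal entries, each of absolute value $\le\rho$; hence by the strong triangle inequality $|(T^k)_{ij}|\le M^{m-1}\rho^{\,k-m+1}$, which tends to $0$. This combinatorial bound is the one genuinely new step — the non-Archimedean substitute for ``spectral radius $<1$ implies $B^k\to0$'' — and I expect it to be the main obstacle; the remaining steps are formal.

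Next I would set up the decomposition. Since $\lambda_\max\in K$ is a \emph{simple} eigenvalue, the characteristic polynomial factors over $K$ as $f(x)=(x-\lambda_\max)g(x)$ with $g(\lambda_\max)\ne0$, so $\gcd(x-\lambda_\max,\,g(x))=1$ and there is a B\'ezout relation $u(x)(x-\lambda_\max)+v(x)g(x)=1$ in $K[x]$. Putting $V=\ker(A-\lambda_\max I)$ and $W=\ker g(A)$, the identity $u(A)(A-\lambda_\max I)+v(A)g(A)=I$ together with the Cayley--Hamilton relation $(A-\lambda_\max I)g(A)=f(A)=0$ gives the $A$-invariant direct sum $K^n=V\oplus W$ with $\dim_K V=1$, and shows that $\Pi:=v(A)g(A)$ is the projection of $K^n$ onto $V$ along $W$; in particular $\Pi\in M_{n\times n}(K)$.

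Finally I would assemble the two pieces. Picking $Q_0\in\GL_n(K)$ adapting the standard basis to $V\oplus W$, one gets $\lambda_\max^{-1}A=Q_0\,\diag(1,B)\,Q_0^{-1}$, where the $1\times 1$ block is $\lambda_\max^{-1}(A|_V)=1$ and $B$ represents $\lambda_\max^{-1}(A|_W)$; since the characteristic polynomial of $A|_W$ is $g$, every eigenvalue of $B$ is of the form $\lambda/\lambda_\max$ with $\lambda$ a non-maximal eigenvalue of $A$, hence has absolute value $<1$. The sub-lemma then gives $\|B^k\|\to0$, so
\[
\|(\lambda_\max^{-1}A)^k-\Pi\|=\|Q_0\,\diag(0,B^k)\,Q_0^{-1}\|\le\|Q_0\|\,\|Q_0^{-1}\|\,\|B^k\|\longrightarrow 0 .
\]
Thus the limit $P_A$ exists and equals $\Pi$, so $P_A\in M_{n\times n}(K)$, $P_A^2=P_A$, and $\im P_A=V$ is the $\lambda_\max$-eigenspace of $A$, which is exactly the assertion to be proved.
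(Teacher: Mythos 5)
Your argument is correct, and it differs from the paper's in both of its two main steps. The paper conjugates $A$ into Jordan canonical form over $\CC_K$ and writes down the explicit binomial formula for the $k$-th power of each Jordan block $J_i$; the decay then comes from $|\mu_i|<1$ together with the fact that integer binomial coefficients have non-Archimedean absolute value $\le 1$. Having produced $P_A=Q^{-1}BQ$ with $Q$ a priori only in $M_{n\times n}(\CC_K)$, the paper then recovers $P_A\in M_{n\times n}(K)$ indirectly, by appealing to completeness of $K$ (each partial product $(\lambda_\max^{-1}A)^k$ has entries in $K$, and the limit of a convergent sequence in $K$ lies in $K$). You instead prove a standalone contraction lemma for a triangularized matrix $T$ with $\rho=\max_i|t_{ii}|<1$, bounding $|(T^k)_{ij}|\le M^{m-1}\rho^{k-m+1}$ by counting how many factors in each product $t_{ii_1}\cdots t_{i_{k-1}j}$ must be diagonal; this needs only Schur triangularization rather than full Jordan form, and it replaces the binomial-coefficient estimate by the combinatorial observation that a weakly increasing chain in $\{1,\dots,m\}$ has at most $m-1$ strict increases. (The bound is valid as stated since $\rho\le M$; if $\rho=0$ then $T$ is nilpotent and the claim is trivial.) For the algebraic side, you perform the primary decomposition $K^n=\ker(A-\lambda_\max I)\oplus\ker g(A)$ directly over $K$ using the B\'ezout identity, so the limiting projection is identified from the outset as $\Pi=v(A)g(A)\in M_{n\times n}(K)$ — an explicit polynomial in $A$ with coefficients in $K$ — and no completeness argument is needed to land in $K$. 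Both routes are legitimate; the paper's is more hands-on and self-contained (only elementary Jordan-form manipulations), while yours isolates the non-Archimedean input in a reusable spectral-radius lemma and gives a cleaner $K$-rational description of $P_A$.
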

\begin{proof}
It suffices to prove that the limit $(\ref{MatrixLimit})$ exists in the space $M_{n\times n}(\CC_K)$ endowed with the norm $\|\cdot\|$ defined in $(\ref{NormDef})$.  Plainly this is equivalent to componentwise convergence, and the fact that such a limit $P_A$ must have entries in $K$ follows from the completeness of $K$.

We have $A=Q^{-1}JQ$, where $Q\in M_{n\times n}(\CC_K)$ is a nonsingular change of basis matrix, and
\begin{align*}
J &=
\begin{pmatrix}
\lambda_\max  & 0      & \hdots & 0      \\
0         & J_1    & \hdots & 0      \\
\vdots    & \vdots & \ddots & \vdots \\ 
0         & 0      & \hdots & J_r
\end{pmatrix}
\end{align*}
is a Jordan canonical form for $A$ (written here in block form); here $\lambda_\max$ is the maximal eigenvalue guaranteed by Theorem~\ref{MainThmGeneral} part (a), and each $J_i$ is an $m_{i} \times m_{i}$ Jordan block
\begin{align*}
J_{i} &=
\begin{pmatrix}
\lambda_i & 1         & \hdots & 0         & 0         \\
0         & \lambda_i & \hdots & 0         & 0         \\
\vdots    & \vdots    & \ddots & \vdots    & \vdots    \\ 
0         & 0         & \hdots & \lambda_i & 1         \\
0         & 0         & \hdots & 0         & \lambda_i \\
\end{pmatrix}.
\end{align*}
Thus the $\lambda_1,\dots,\lambda_r$ denote the (not necessarily distinct) nonmaximal eigenvalues of $A$.  Given a positive integer $k$, we have 
\begin{align*}
J^k &=
\begin{pmatrix}
\lambda_\max ^{k} & 0      & \hdots & 0      \\
0         & J_{1}^k    & \hdots & 0      \\
\vdots    & \vdots & \ddots & \vdots \\ 
0         & 0      & \hdots & J_{r}^k
\end{pmatrix}
\end{align*}
and a straightforward induction argument gives the well-known formula
\begin{align*}
J_{i}^{k} &=
\renewcommand*{\arraystretch}{1.3}
\arraycolsep=7pt
\begin{pmatrix}
\lambda_i^{k} & {k \choose 1} \lambda_i^{k-1} & \hdots & {k \choose m_{i}-2} \lambda_i^{k-(m_{i}-2)} & {k \choose m_{i}-1} \lambda_i^{k-(m_{i}-1)} \\
0         & \lambda_i^{k} & \hdots & {k \choose m_{i}-3} \lambda_i^{k-(m_{i}-3)} & {k \choose m_{i}-2} \lambda_i^{k-(m_{i}-2)} \\
\vdots    & \vdots    & \ddots & \vdots    & \vdots    \\ 
0         & 0         & \hdots & \lambda_i^{k} & {k \choose 1} \lambda_i^{k-1} \\
0         & 0         & \hdots & 0         & \lambda_i^{k} \\
\end{pmatrix}
\end{align*}
for the $k$-th power of an $m_{i} \times m_{i}$ Jordan block; here we interpret ${k \choose m} = 0$ whenever $k<m$, as is standard.  We now divide through by $\lambda_\max ^k$ to obtain
\begin{align*}
(\lambda_\max^{-1}J)^k &=
\begin{pmatrix}
1 & 0      & \hdots & 0      \\
0         & (\lambda_\max^{-1}J_1)^k    & \hdots & 0      \\
\vdots    & \vdots & \ddots & \vdots \\ 
0         & 0      & \hdots & (\lambda_\max^{-1}J_r)^k 
\end{pmatrix}
\end{align*}
and
\begin{align*}
(\lambda_\max^{-1}J_i)^k &=
\renewcommand*{\arraystretch}{1.3}
\arraycolsep=7pt
\begin{pmatrix}
\mu_i^{k} & {k \choose 1} \mu_i^{k-1} \lambda_\max^{-1} & \hdots & {k \choose m_{i}-1} \mu_i^{k-(m_{i}-1)} \lambda_\max^{-(m_{i}-1)} \\
0         & \mu_i^{k} & \hdots & {k \choose m_{i}-2} \mu_i^{k-(m_{i}-2)} \lambda_\max^{-(m_{i}-2)} \\
\vdots    & \vdots    & \ddots & \vdots    \\ 
0         & 0         & \hdots & {k \choose 1} \mu_i^{k-1} \lambda_\max^{-1} \\
0         & 0         & \hdots & \mu_i^{k} \\
\end{pmatrix}
\end{align*}
where for each $1\leq i\leq r$ we define $\mu_i=\lambda_i/\lambda_\max$.

Noting that $|\mu_i|=|\lambda_{i}/\lambda_{\max}|<1$ for $1 \leq i \leq r$, and recalling that binomial coefficients are integers and therefore have non-Archimedean absolute value at most $1$, we see that each block $(\lambda_\max^{-1}J_i)^k$ converges to the zero matrix as $k\to+\infty$. We conclude that 
\begin{equation*}
\lim_{k\to+\infty}(\lambda_\max^{-1}J)^k=B
\end{equation*}
where
\begin{equation*}
B=\begin{pmatrix}
1 & 0      & \hdots & 0      \\
0         & 0    & \hdots & 0      \\
\vdots    & \vdots & \ddots & \vdots \\ 
0         & 0      & \hdots & 0 
\end{pmatrix}.
\end{equation*}
Now define 
\begin{equation}\label{ProjectionCalc}
P_A=Q^{-1}BQ.
\end{equation}
Using $(\ref{NormBound})$ we have 
\begin{equation*}
\begin{split}
\|(\lambda_\max^{-1}A)^k-P_A\| & = \|Q^{-1}((\lambda_\max^{-1}J)^k-B)Q\| \\
	& \leq \|Q^{-1}\|\|Q\|\|(\lambda_\max^{-1}J)^k-B)\|\to0 \\
\end{split}
\end{equation*}
as $k\to+\infty$, completing the proof of $(\ref{MatrixLimit})$.  We have $P_A^2=Q^{-1}B^2Q=Q^{-1}BQ=P_A$, and therefore $P_A$ is a projection operator.  Finally, given any vector $\x\in K^n$, we have
\begin{equation*}
\begin{split}
AP_A\x & = A\lim_{k\to+\infty}(\lambda_\max^{-1}A)^k\x \\
	& = \lambda_\max\lim_{k\to+\infty}(\lambda_\max^{-1}A)^{k+1}\x \\
	& = \lambda_\max P_A\x
\end{split}
\end{equation*}
and thus $P_A\x$ is a $\lambda_\max$-eigenvector; therefore the image of $P_A$ is the $\lambda_\max$-eigenspace of $K^n$.
\end{proof}

\section{A Counterexample when $\ell=2\,\ord_\pi(n)$}\label{SharpExampleSect}

In this section we give an example showing that Theorem~\ref{MainThmGeneral} is sharp, in the sense that the hypothesis $\ell>2\,\ord_\pi(n)$ cannot be relaxed.

Let $p$ be a prime, let $K=\QQ_p$, and let $n$ be a positive integer such that $p\mid n$.  Set $\ell=2\,\ord_p(n)$, and consider the $n\times n$ matrix 
\begin{equation*}
\begin{split}
A & =(1+p^\ell a_{ij}) \\
a_{ij} & = 
\begin{cases}
	1 & \text{ if $i=j=1$} \\
	0 & \text{ otherwise.}
\end{cases}
\end{split}
\end{equation*}
In other words, all entries of $A$ are equal to $1$ except the upper-left entry, which is equal to $1+p^\ell$.  In particular, all entries of $A$ are elements of the disc $D(1,p^{-\ell})$, as required in the hypotheses of Theorem~\ref{MainThmGeneral}.  Since $A$ has rank $2$, its characteristic polynomial $f(x)$ vanishes to multiplicity at least $n-2$.  We then have 
\begin{equation*}
f(x) = x^n+c_{n-1}x^{n-1}+c_{n-2}x^{n-2}
\end{equation*}
where
\begin{equation}\label{ExampleCoeff1}
c_{n-1} = -\tr(A)=-(n+p^\ell),
\end{equation}
and we will show that
\begin{equation}\label{ExampleCoeff2}
c_{n-2} = (n-1)p^\ell.
\end{equation}
In particular, the eigenvalues of $A$ in $\CC_p$ are $0,\lambda_1,\lambda_2$, where $\lambda_1,\lambda_2$ are the roots of the quadratic polynomial
\begin{equation*}
g(x) = x^2+c_{n-1}x+c_{n-2}.
\end{equation*}
In fact we have $\lambda_1\neq\lambda_2$.  This can be seen by an elementary argument, showing that the discriminant of $g$ is positive; alternatively, all entries of $A$ are positive, so the classical Perron-Frobenius theorem implies that $A$ must have a simple nonzero eigenvalue.

From $(\ref{ExampleCoeff1})$ and $(\ref{ExampleCoeff2})$ we have $\ord_p(c_{n-1})=\ord_p(n)=\ell/2$ and $\ord_p(c_{n-2})=\ord_p(p^\ell)=\ell$.  It follows from the Newton polygon of $g(x)$ that $|\lambda_1|_p=|\lambda_2|_p=p^{-\ell/2}$, and thus $A$ has no strictly maximal eigenvalue.  

Finally, we include the calculation $(\ref{ExampleCoeff2})$.  By $(\ref{CharPolyCalc1})$ and $(\ref{CharPolyCalcCoeff})$ we have
\begin{equation*}
c_{n-2} = \sum_{\stackrel{I\subseteq \{1,\dots,n\}}{|I|=n-2}}T(I),
\end{equation*}
where for each subset $I\subseteq \{1,\dots,n\}$ with $|I|=n-2$,
\begin{equation*}
T(I)= \prod_{i\in I^c}(1+p^\ell a_{ii})-\prod_{i\in I^c}(1+p^\ell a_{i\tau(i)})
\end{equation*}
where $\tau\in S_n$ is the transposition which swaps the two elements of $I^c$.  If $1\in I$, then because of our choice of $a_{ij}$ we have $T(I)=1-1=0$.  If $1\not\in I$, say $I^c=\{1,i_0\}$ for $i_0\neq 1$, then $T(I)=(1+p^\ell)-(1)=p^\ell$.  The number of subsets $I$ of $\{1,\dots,n\}$ of size $|I|=n-2$ satisfying $1\not\in I$ is $\binom{n-1}{n-2}=n-1$, and therefore
\begin{equation*}
c_{n-2} = \sum_{\stackrel{I\subseteq \{1,\dots,n\}}{|I|=n-2}}T(I)=(n-1)p^\ell,
\end{equation*}
as desired.



\end{document}